\newtheorem{theorem}{Theorem}[section]
\newtheorem{lemma}[theorem]{Lemma}
\theoremstyle{definition}
\theoremstyle{remark}
\newtheorem{remark}[theorem]{Remark}
\newcommand{\Z}{{\mathbb{Z}}}
\newcommand{\Q}{{\mathbb{Q}}}
\newcommand{\R}{{\mathbb{R}}}
\newcommand{\Bv}{{\mathbf{v}}}
\newcommand{\OK}{{\mathcal K}}
\newcommand{\OO}{{\mathcal O}}
\newcommand{\floor}[1]{\left\lfloor#1\right\rfloor}
\newcommand{\prpr}{{\prime\prime}}
\newcommand{\p}{\mathfrak{p}}
\newcommand{\h}{\mathrm{h}}
\let\Lm=\Lambda
\let\abs=\envert
\let\bs=\backslash
\newcommand{\acr}{\newline\indent}
\begin{document}
\title{Ratios of two powers of van der Laan-Padovan numbers}
\author[Tomohiro Yamada]{Tomohiro Yamada*}
\address{\llap{*\,}Center for Japanese language and culture\acr
                   Osaka University\acr
                   562-8678\acr
                   3-5-10, Semba-Higashi, Minoo, Osaka\acr
                   JAPAN}
\email{tyamada1093@gmail.com}

\subjclass[2020]{11A05, 11B37, 11B83, 11D61, 11J86.}
\keywords{Exponential diophantine equation; Linear recurrence sequence; van der Laan-Padovan sequence}

\begin{abstract}
The van der Laan-Padovan sequence $P_n ~ (n=0, 1, \ldots)$ is defined by $P_0=1, P_1=P_2=0$, and
$P_{n+3}=P_{n+1}+P_n$ for $n=0, 1, \ldots$.
We determine all pairs $(P_m, P_n)$ satisfying
$P_m^b=2^{g_1} 3^{g_2} 5^{g_3} 7^{g_4}P_n^a$ for some integers $g_1, g_2, g_3, g_4$, $a$, and $b$.
More generally, for a linear recurrence sequence $u_n$ satisfying the dominant root condition
and a given set of primes $p_1, \ldots, p_k$,
there exist only finitely many pairs $(u_m, u_n)$ satisfying
$u_m^b=p_1^{g_1} \cdots p_k^{g_k}u_n^a$ for some integers $g_1, \ldots, g_k$, $a$, and $b$.
\end{abstract}

\maketitle

\section{Introduction}\label{intro}

An $r$-th order linear recurrence sequence $(u_n)$ is a sequence
determined by initial terms $u_0, \ldots, u_{r-1}$ and the recurrence relation
\begin{equation}\label{eq11}
u_{n+r}=s_{r-1}u_{n+r-1}+\cdots +s_0 u_n
\end{equation}
for $n\geq 0$ with some complex numbers $s_0, \ldots, s_{r-1}$ and its characteristic polynomial
$$P(X)=X^r-s_{r-1}X^{r-1}-\cdots -s_0=(X-\alpha_1)^{d_1} \cdots (X-\alpha_t)^{d_t},$$
where we write distinct roots of the polynomial $P(X)$ for $\alpha_1, \ldots, \alpha_t$.

As is well known (see for example, 1.1.6 of \cite{EPSW}), $u_n$ can be written in the form
\begin{equation}\label{eq12}
u_n=\sum_{i=1}^t q_i(n) \alpha_i^n, q_i(n)=\sum_{j=1}^{d_i} \kappa_{w_i+j} n^{j-1}.
\end{equation}
for certain real numbers $\kappa_1$, $\ldots$, $\kappa_r$, where $w_i=d_1+\cdots +d_{i-1}$.

Throughout this paper, we assume that $s_0$, $\ldots$, $s_{r-1}$, $u_0$, $\ldots$, $u_{r-1}$ are integers.

Arithmetic properties and diophantine equations concerning linear recurrence sequences
have been extensively studied.
Among such sequences, the most extensively studied ones are Lucas sequences
$u_n=(\alpha^n-\beta^n)/(\alpha-\beta)$ with $\alpha$ and $\beta$ distinct roots of a given quadratic equation $X^2-s_1 X-s_0=0$ and the discriminant $\Delta=s_1^2-4s_0$,
where $s_0$ and $s_1$ are relatively prime and $\alpha/\beta$ is not a root of unity.
For such sequences, Bilu-Hanrot-Voutier theorem \cite{BHV} states that, for $n\geq 30$, $u_n$
always has a prime factor which does not divide $u_m$ with $0\leq m<n$.
It is well known that a prime $p$ not dividing $s_0 s_1 \Delta$ always divides $u_{p-(\Delta/p)}$
for a given Lucas sequence $(u_n)$ (such properties of Lucas sequences are concisely explained in
\cite[Section 2.IV]{Rib}).
Hence, for any given Lucas sequence $(u_n)$ and prime numbers $p_1, \ldots, p_k$,
we can easily determine all integers in this sequence $(u_n)$ all of whose prime factors belong to $p_1, \ldots, p_k$.
Moreover, Bugeaud, Mignotte, and Siksek \cite{BMS} gives a practical way to determine all powers in a given Lucas sequences,
although they apply an elaborate combination of modular techniques via Frey curves and Baker's method.

Less is known on general linear recurrence sequences.
However, many results are known under some restrictions.
For an overview of arithmetic and other properties of linear recurrence sequences and their applications, we refer to \cite{EPSW}.
Earlier results are surveyed by Stewart \cite{Ste2}.

Assume that $\alpha_i/\alpha_j$ are not roots of unity for $1\leq i<j\leq r$.
Mahler proved that $\lim_{n\rightarrow\infty}\abs{u_n}=\infty$.
Later, van der Poorten and Schlickewei \cite{vdPS} proved that for any $\epsilon>0$, the inequality
$$\abs{u_n}>\abs{\alpha_1}^{n(1-\epsilon)}$$
holds for sufficiently large $n$ (a more accessible proof is given by Fuchs and Heintze \cite{FuH}).
They also proved that, writing $P(a/b)$ with $\gcd(a, b)=1$ for the largest prime factor of an integer $ab$,
$\lim_{n\rightarrow\infty}P(u_n)=\infty$.
Evertse \cite {Evtse1} proved that 
$\lim_{n\rightarrow\infty, n>s, u_s\neq 0}P(u_n/u_s)=\infty$.

We note that these results are ineffective.
Indeed, no effective version of even an inequality $\lim_{n\rightarrow\infty}\abs{u_n}=\infty$ has been known
applicable for every non-degenerate sequence $(u_n)$.

For special cases, effective results are known.
Mignotte \cite{Mig} proved that 
If $\abs{\alpha_1}=\cdots =\abs{\alpha_\ell}>\abs{\alpha_{\ell+1}}$ with $\ell\leq 3$, then
$$\abs{u_n}>c^\prime \abs{\alpha_1}^n/n^c$$
for $n\geq n_0$
whenever $q_1(n)\alpha_1^n+\cdots +q_\ell(n)\alpha_\ell^n\neq 0$,
where $c$, $c^\prime$, and $n_0$ are positive constants effectively computable in terms of
$\alpha_1, \ldots, \alpha_\ell$, $q_1, \ldots, q_\ell$.
Shparlinskii \cite[Theorem 1]{Shp} proved that
if $\abs{\alpha_1}>\abs{\alpha_2}$, then
$P(u(n))>C\log n$ for $n\geq n_0$,
where $C>0$ and $n_0$ are effectively computable in terms of the sequence $(u_n)$.

Stewart \cite[Theorem 4]{Ste1} proved that,
for an algebraic field $\OK$ of degree $d$ over $\Q$, a real algebraic number $\alpha$ in $\OK$
with $\abs{\alpha}>1$,
if an integer $u(n)$ can be written in the form
$$u(n)=f(n)\alpha^n+h(n),$$
where $f(n)$ is a nonzero polynomial with coefficients from $\OK$
and $\abs{h(n)}<\abs{\alpha}^{\gamma n}$ for some $\gamma<1$,
then
$$P(u(n))>(1-\epsilon)\log n$$
for $n>n_0$, where $n_0$ is an effectively computable constant depending on
$d, \alpha, f, \gamma$, and $\epsilon$.
Stewart \cite{Ste3} replaced this lower bound by $C\log n\log\log n/\log\log\log n$
with $n_0$ and $C$ effectively computable depending on
$\alpha, f$, and $\gamma$.

Peth\"{o} \cite{Pet} proved that, for a given second-order linear recurrence sequence $(u_n)$
with integral parameters satisfying quite natural conditions and a given finite set $S$ of primes,
$u_n=wx^q$ has only finitely many solutions in integers $n, w, x, q$ with $\abs{x}>1$, $q\geq 2$,
and $w$ composed of primes in $S$, which can be bounded by an effectively computable constant.
Bugeaud and Kaneko \cite{BK} proved that if $\abs{\alpha_1}>\abs{\alpha_2}$ and $s_0\neq 0$,
then there are only finitely many perfect powers in $(u_n)$
and their number can be bounded by an effectively computable constant.

On the other hand, Evertse \cite{Evtse2} had proved that
if no quotient $\alpha_i/\alpha_j$ with $i\neq j$ is a root of unity,
then $P(u_n)\rightarrow \infty$ together with $n$ and
$P(u_m/u_n)\rightarrow \infty$
when $m>n$, $u_n\neq 0$, and $m$ tends to infinity.
However, Evertse uses Schlickewei's $p$-adic subspace theorem \cite{Sch},
which makes these results ineffective.

Odjoumani and Ziegler \cite{OZ1} proved that, if $\abs{\alpha_1}>\abs{\alpha_2}>\abs{\alpha_3}$ or
$\abs{\alpha_1}>\abs{\alpha_2}$ and $\kappa_1$ and $\alpha_1$ are multiplicatively independent,
then for any prime $p$ outside an effectively determinable finite set,
$(u_n)$ contains at most one prime power $\pm p^m$ with $m\neq 0$.
They \cite{OZ2} extended this result by proving that under some additional conditions,
$(u_n)$ contains at most two integers $\pm p^a q^b$ with $p^a q^b>1$ composed by two given primes $p$ and $q$
unless $p$ or $q$ belongs an effectively determinable finite set.

Results of Odjoumani and Ziegler imply that, for any given linear recurrence sequence $u_n$
satisfying a certain condition, there are only finitely many triples $(m, n, \ell)$ of nonnegative integers
satisfying
\begin{equation}
u_m^a u_n^b u_\ell^c=1
\end{equation}
for some integers $a, b, c$.
G\'{o}mez Ruiz and Luca \cite{GL} proved that for given $k$ different binary recurrence sequences
$u_n^{(1)}, \ldots, u_n^{(k)}$ satisfying certain conditions,
there are only finitely many $k$-tuples $u_{n_1}^{(1)}, \ldots, u_{n_k}^{(k)}$ such that
\begin{equation}
u_n^{(1) a_1} \cdots u_n^{(k) a_k}=1
\end{equation}
for some integers $a_1, \ldots, a_k$.
Independently of them, we determined all integer solutions of
\begin{equation}
\sigma(2^m)^a \sigma(3^n)^b \sigma(5^\ell)^c=1
\end{equation}
with $m, n, \ell>0$ in \cite{Ymd}.

In this paper, we prove the following finiteness result on multiplicative relations
in a given linear recurrence sequence.
\begin{theorem}\label{th1}
Let $u_n$ be an $r$-th order linear recurrence sequence defined by \eqref{eq11}.
Assume that roots $\alpha_1, \ldots, \alpha_t$ of its characteristic polynomials satisfy
$\abs{\alpha_1}>\abs{\alpha_2}\geq \cdots \geq \abs{\alpha_t}$, $\kappa_1\neq 0$,
$\kappa_i\neq 0$ for some $i\geq 2$, and $d_1=1$
(we note that $\alpha_1$ must be real and $q_1(n)=\kappa_1$ under this assumption).

If $n>m$ and $u_n^a=p_1^{g_1} \cdots p_k^{g_k} u_m^b$
for some integers $g_1, \ldots, g_k, a, b$ with $a>0$ and $b\geq 0$, then $m$ and $n$ can be bounded by
an effectively computable constant as given in Theorem \ref{th41} explicitly.
\end{theorem}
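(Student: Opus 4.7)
The plan is to combine an effective linear-form-in-logarithms estimate with the $S$-integrality structure of the equation (writing $S = \{p_1, \ldots, p_k\}$) to bound $m$ in terms of $n$, and then to use effective lower bounds for the largest prime factor of $u_n$ on dominant-root sequences to bound $n$ itself.

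First I would dispose of the degenerate case $b = 0$: then $u_n^a = p_1^{g_1}\cdots p_k^{g_k}$, so $u_n$ is an $S$-unit, and the effective results of Stewart and Shparlinski recalled in Section \ref{intro} bound $n$. Assume henceforth $b \geq 1$; by descent one may further assume $\gcd(a, b) = 1$. Taking $p$-adic valuations of both sides of $u_n^a = C u_m^b$ (with $C = p_1^{g_1}\cdots p_k^{g_k}$) for any prime $p \notin S$ gives $a\,v_p(u_n) = b\,v_p(u_m)$, whence $b \mid v_p(u_n)$ and $a \mid v_p(u_m)$. Consequently either $u_m$ is itself an $S$-unit (so $m$ is bounded by the case $b = 0$ applied to $m$), or $u_m$ is divisible by the $a$-th power of some prime outside $S$, which yields
\[
a \log 2 \leq \log|u_m| \leq m \log|\alpha_1| + O(1),
\]
so $a = O(m)$. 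The symmetric argument on $u_n$ yields either a bound on $n$ or $b = O(n)$.

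For the Baker-type step, I would use the dominant-root expansion $u_n = \kappa_1 \alpha_1^n(1 + E_n)$ with $E_n = O\bigl((|\alpha_2|/|\alpha_1|)^n n^{d_2 - 1}\bigr)$ (valid under the hypotheses $d_1 = 1$ and $\kappa_1 \neq 0$; the assumption $\kappa_i \neq 0$ for some $i \geq 2$ ensures that the error term is genuinely present). Taking logarithms of $|u_n|^a = |C||u_m|^b$ produces
\[
\Lambda := (an - bm)\log|\alpha_1| + (a - b)\log|\kappa_1| - \sum_{i=1}^k g_i \log p_i,
\]
with $|\Lambda| \leq c_0 \max(a, b)(|\alpha_2|/|\alpha_1|)^{\min(m,n)}(\min(m,n))^{d_2 - 1}$. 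The crude estimate $v_{p_i}(u_n u_m) \leq \log|u_n u_m|/\log 2$ gives $|g_i|, |an-bm|, |a-b| = O((a+b)(n+m))$. Provided $\Lambda \neq 0$, Baker's theorem gives $|\Lambda| \geq c H^{-\kappa}$ with effective $c, \kappa$ depending only on $\alpha_1, \kappa_1, p_1, \ldots, p_k$, where $H$ is the maximum coefficient. Combining upper and lower bounds and using $a = O(m), b = O(n)$ produces an effective bound of the form $m \leq c_1 \log n$.

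Finally, to bound $n$ absolutely, I would exploit the fact that every prime factor of $u_n$ outside $S$ must also divide $u_m$, so every prime factor of $u_n$ lies in $S \cup \{p : p \mid u_m\}$, a finite set whose elements are bounded by $|u_m| = O(|\alpha_1|^m)$. Effective estimates for the largest prime factor of terms of dominant-root sequences, combined with the bound $m \leq c_1 \log n$ from the previous step, then force an absolute bound on $n$; from this, $m$ is bounded as well. The hardest step will be closing this final quantitative loop: the growth rate $|\alpha_1|^m \leq n^{c_1 \log|\alpha_1|}$ needs to be dominated by the lower bound for $P(u_n)$, which requires carefully managing the effective constants in Baker's and Stewart's theorems. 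A secondary subtlety is the degenerate case $\Lambda = 0$, corresponding to a $\mathbb{Z}$-linear dependence among $\log|\alpha_1|, \log|\kappa_1|, \log p_1, \ldots, \log p_k$: here $(a, b, m, n, g_1, \ldots, g_k)$ is forced onto a sublattice, on which the equation must be re-examined using the secondary asymptotic term coming from $\alpha_2$, exactly where the hypothesis $\kappa_i \neq 0$ for some $i \geq 2$ feeds back in.
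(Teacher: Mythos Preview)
Your Baker step yielding $m \le c_1 \log n$ is essentially the same as the paper's, and your reduction to $\gcd(a,b)=1$ together with the valuation argument giving $a=O(m)$, $b=O(n)$ also matches what the paper does. The genuine gap is the final closing step.

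You want to bound $n$ by combining the upper bound $P(u_n)\le |u_m|=O(|\alpha_1|^m)\le n^{c_1\log|\alpha_1|}$ with an effective lower bound for $P(u_n)$. But the effective results you cite (Shparlinski, Stewart) give only $P(u_n)\gg \log n$ or $P(u_n)\gg \log n\,\frac{\log\log n}{\log\log\log n}$; both are $o(n^{\varepsilon})$ for every $\varepsilon>0$, so they can never contradict a polynomial upper bound of the form $n^{c}$. The loop does not close this way, and no known effective estimate on $P(u_n)$ is strong enough to make it close. You flag this as ``the hardest step,'' but it is not merely hard---it is beyond current effective methods.

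The paper closes the loop differently, by a \emph{second} application of Matveev rather than by a prime-factor argument. From $\gcd(a,b)=1$ and the valuation identity $a\,v_p(u_n)=b\,v_p(u_m)$ for $p\notin S$ one gets not just that the $S$-free parts of $u_m$ and $u_n$ share the same prime support, but that they are exact powers of a common integer: $u_m=p_1^{e_1}\cdots p_k^{e_k}x^{a}$ and $u_n=p_1^{f_1}\cdots p_k^{f_k}x^{b}$ for a single integer $x$ coprime to $p_1\cdots p_k$. Now apply Lemma~\ref{lm31} with the enlarged set $\{p_1,\ldots,p_k,x\}$ in place of $\{p_1,\ldots,p_k\}$: this bounds $n$ \emph{linearly} in $\log x$, say $n\le C_2\Psi(\log x)\log(C_2'\Psi)$. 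Feeding this and $a,b=O(\log x)$ into your linear form $\Lambda$ (now with coefficients bounded polynomially in $\log x$) gives $m\le C_5\Psi\log\log x$. Since $a\log x\le \log|u_m|=O(m)$, one obtains $a\log x=O(\log\log x)$, which bounds $\log x$ absolutely, and hence $m$ and $n$. The key device you are missing is to treat the unknown $S$-free kernel $x$ as one more generator in the first Baker estimate, so that its height enters only multiplicatively in $\Omega$ rather than through an uncontrolled set of new primes.

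A secondary point: your treatment of $\Lambda=0$ via ``the secondary asymptotic term coming from $\alpha_2$'' is not how the paper proceeds and is not obviously workable. The paper (and Lemma~\ref{lm21} as stated) assumes that $p_1,\ldots,p_k,\alpha_1,\kappa_1$ are multiplicatively independent, which forces $a-b=an-bm=g_1=\cdots=g_k=0$ whenever $\Lambda=0$, immediately contradicting $a>0$ and $m\neq n$.
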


\begin{remark}
The condition $b\geq 0$ is essentially not restrictive.
Indeed, if $b<0$, then $u_n^a$ divides $p_1^{g_1} \cdots p_k^{g_k}$ and 
therefore $u_n=p_1^{h_1} \cdots p_k^{h_k}$ for some integers $h_1, \ldots, h_k$.
\end{remark}

Although our bound given in Theorem \ref{th41} is considerably large, we can determine all solutions of \eqref{eq11}
with the aid of the lattice reduction method, when given parameters are not extremely large.
We would like to do it for the van der Laan-Padovan sequence
$$1, 0, 0, 1, 0, 1, 1, 1, 2, 2, 3, 4, 5, 7, 9, 12, 16, 21, 28, 37, 49, 65, 86, 114, 151, 200, \ldots$$
defined by
$$P_0=1, P_1=P_2=0$$
and
\begin{equation}
P_{n+3}=P_{n+1}+P_n
\end{equation}
for $n\geq 0$.
In literature, this sequence has been called the Padovan sequence after Richard Padovan.
However, Padovan himself attributes this sequence to Dom Hans van der Laan (see for example,
\cite[Chapter 5]{Pad}).
Indeed, this sequence has been implicitly introduced in a work \cite[Chapter 8]{vdL} of van der Laan.
So that, this sequence should be called the van der Laan-Padovan sequence.

We find all pairs $(m, n)$ of nonnegative integers $m, n$ satisfying
\begin{equation}\label{eq13}
P_n^a=2^{g_1} 3^{g_2} 5^{g_3} 7^{g_4} P_m^b.
\end{equation}

\begin{theorem}\label{th2}
If \eqref{eq13} holds for some integers $g_1, g_2, g_3, g_4, a, b$ with $a>0$,
then we must have $m, n\in\{1, 2, 4\}$, $m, n\in\{0, 3, 5, \ldots, 18, 20, 25, 36\}$, or $m, n\in\{21, 27, 49\}$.
\end{theorem}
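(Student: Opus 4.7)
The plan is to combine the effective upper bound supplied by Theorem~\ref{th1} with computational reduction to narrow the admissible pairs $(m,n)$ down to a finite, enumerable list. First, I would verify that the hypotheses of Theorem~\ref{th1} hold for the van der Laan-Padovan sequence: its characteristic polynomial $X^3-X-1$ has one real root $\alpha_1$ (the plastic number, $\alpha_1\approx 1.3247$) with $\abs{\alpha_1}>1$, and a complex-conjugate pair $\alpha_2,\alpha_3$ of modulus $\approx 0.8688<1$, so that $\abs{\alpha_1}>\abs{\alpha_2}=\abs{\alpha_3}$, the dominant root is simple ($d_1=1$), and one checks directly from $P_0=1,P_1=P_2=0$ that $\kappa_1\neq 0$ and $\kappa_2\neq 0$. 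Feeding the resulting numerical data and the prime set $\{2,3,5,7\}$ into the explicit version of the bound (Theorem~\ref{th41}) yields an effective upper bound $N_0$ on $\max(m,n)$. By the standard profile of Baker-type estimates, $N_0$ will be astronomically large, so a second, computational phase is necessary.

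In the reduction phase I would isolate the small linear form
\[
\Lambda = (an-bm)\log\alpha_1 + (a-b)\log\kappa_1 - g_1\log 2 - g_2\log 3 - g_3\log 5 - g_4\log 7
\]
coming from \eqref{eq13} via the asymptotic $P_n=\kappa_1\alpha_1^n(1+O((\abs{\alpha_2}/\alpha_1)^n))$, which is exponentially small in $n$. After auxiliary size comparisons in \eqref{eq13} bound $\abs{a},\abs{b},\abs{g_i}$ linearly in $n$, I would apply Baker--Davenport / LLL lattice reduction to the lattice spanned by $\log\alpha_1,\log\kappa_1,\log 2,\log 3,\log 5,\log 7$, iteratively trimming $N_0$ down to a range of the order of a few tens.

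Once the bound is manageable, I would compute each $P_n$ in the reduced range, strip off its $\{2,3,5,7\}$-part to obtain the cofactor $\widetilde P_n$, and test for which pairs $(m,n)$ the equation $\widetilde P_n^a=\widetilde P_m^b$ admits a solution in integers with $a>0$. This should partition the admissible pairs into exactly the three families in the statement: the trivial case $P_m=P_n=0$ giving $\{1,2,4\}$; the $\{2,3,5,7\}$-smooth case $\widetilde P_n=\widetilde P_m=1$ giving $\{0,3,5,\ldots,18,20,25,36\}$; and the sporadic set $\{21,27,49\}$, where $P_{21}=5\cdot 13$, $P_{27}=3^3\cdot 13$, and (as must be verified) $P_{49}$ share the non-smooth prime $13$ in compatible proportions.

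The principal obstacle is the reduction step: because $a$ and $b$ multiply $\log\alpha_1$ and $\log\kappa_1$, $\Lambda$ is not linear in a single fixed coefficient vector, so a carefully designed multi-dimensional lattice is needed and several iterations of LLL will likely be required before $N_0$ shrinks to an enumerable size. Moreover, since Theorem~\ref{th41} only provides an effective, not polynomial, bound, the raw starting value $N_0$ is itself the source of much of the computational cost, and the auxiliary linear bounds on $\abs{a},\abs{b},\abs{g_i}$ in terms of $n$ must be tight enough to keep the intermediate reductions tractable.
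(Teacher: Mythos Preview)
Your proposal is correct and follows essentially the same route as the paper: verify the dominant-root hypotheses for the Padovan sequence, invoke Theorem~\ref{th41} for a first effective bound, iterate LLL reduction on the linear form in $\log 2,\log 3,\log 5,\log 7,\log\kappa_1,\log\alpha_1$, and finish by enumerating the $\{2,3,5,7\}$-free cofactors of $P_n$ in the reduced range. A couple of points where the paper's execution sharpens the plan are worth noting: the paper treats the smooth case $P_n=2^{f_1}3^{f_2}5^{f_3}7^{f_4}$ separately first, and in the main case it reduces in two stages, first using $\Lambda$ (which is $O(\delta^{-m})$) to bound $m\le 988$, then for each fixed $m$ switching to the form $\Lambda_1=b\log P_m+\sum g_i\log p_i-an\log\alpha_1-a\log\kappa_1$ (which is $O(\delta^{-n})$) to bound $n$; along the way the observation that the cofactor $H_m$ is never a nontrivial perfect power forces $a=1$, which removes your worry about $a,b$ entering the lattice nonlinearly.
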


Our PARI-GP script is available at
\url{https://drive.google.com/file/d/1_IIBLRToi9jB3FeSmOR3Qx5FH1KFjYGD/view?usp=sharing}.

\section{Preliminaries}
In this section, we shall introduce some notations and lemmas.

Let $\OK$ and $\OO$ denote the number field $\Q(\alpha_1)$ and its ring of integers respectively.

Moreover, we define the absolute logarithmic height $\h(\alpha)$ of an algebraic number $\alpha$ in $\OK$.
For an algebraic number $\alpha$ in $\OK$ and a prime ideal $\p$ over $\OK$ such that
$\alpha=(\zeta_1/\zeta_2) \xi$ with $\xi\in \p^k$ and $\zeta_1, \zeta_2$ in $\OO\bs \p$,
we define the absolute value $\abs{\alpha}_\p$ by
\[
\abs{\alpha}_\p=N\p^{-k}
\]
as usual, where $N\p$ denotes the norm of $\p$, i.e., the rational prime lying over $\p$.
Now the absolute logarithmic height $\h(\alpha)$ is defined by
\[
\h(\alpha)=\frac{1}{2}\left(\log^+ \abs{\alpha}+\log^+ \abs{\bar\alpha}+\sum_\p \log^+\abs{\alpha}_\p\right),
\]
where $\log^+ t=\max\{0, \log t\}$ and $\p$ in the sum runs over all prime ideals over $\OK$.

In order to obtain an upper bound for the size of solutions, we use an lower bound for linear forms of logarithms due to Matveev \cite[Theorem 2.2]{Mat}.

\begin{lemma}\label{lm21}
Assume that $\OK$ is an real algebraic field of degree $D$.
Let $\alpha_1, \alpha_2, \ldots, \alpha_n$ be algebraic integers in $\OK$
which are multiplicatively independent
and $b_1, b_2, \ldots, b_n$ be arbitrary integers with $b_n\neq 0$.
Let \\ $A(\alpha)=\max\{D\h(\alpha), \abs{\log\alpha}\}$ and $A_j=A(\alpha_j)$.

Put
\begin{equation}
\begin{split}
B= & ~ \max \{1, \abs{b_1}A_1/A_n, \abs{b_2}A_2/A_n, \ldots, \abs{b_n} \},\\
\Omega= & ~ A_1A_2\ldots A_n,\\
C(n)= & ~ \frac{16}{n!}e^n(2n+3)(n+2)(4(n+1))^{n+1} \\
& \times \left(\frac{1}{2}en\right)(4.4n+5.5\log n+7+2\log D+\log(1+\log D)), \\
c_1 = & ~ 1.5eD(1+\log D)
\end{split}
\end{equation}
and
\begin{equation}
\Lm=b_1\log \alpha_1+\ldots+b_n\log \alpha_n.
\end{equation}
Then we have $\Lm=0$ or
\begin{equation}
\log\abs{\Lm}>-C(n)\Omega\log (c_1 B).
\end{equation}
\end{lemma}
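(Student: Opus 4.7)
This lemma is Matveev's theorem with his explicit constants, and the paper will only cite \cite{Mat} rather than reprove it. Still, let me sketch how the argument proceeds in outline, so that the shape of the constant $C(n)$ is not purely a black box.

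The plan is the standard Baker-type transcendence scheme, as refined by Feldman, Waldschmidt, W\"{u}stholz, and sharpened by Matveev. I would argue by contradiction, assuming $0<\abs{\Lm}<\exp(-C(n)\Omega\log(c_1B))$, and derive a contradiction in three steps. First, a quantitative form of Siegel's lemma over $\OO$ produces an auxiliary polynomial $F(z_1,\ldots,z_n)$ with prescribed degree $L_j$ in each variable and prescribed order of vanishing $T$ on a grid of points $(\alpha_1^{s_1},\ldots,\alpha_n^{s_n})$ with $s_j$ ranging over a box. Matveev's particular innovation is to use Feldman-style interpolation polynomials in place of raw monomials, which is what produces the favorable $(4(n+1))^{n+1}$-type dependence in $C(n)$ instead of the much worse $n!$-type behavior arising from a naive application of Baker's original method.

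Second, I would run the extrapolation step: the hypothesis that $\abs{\Lm}$ is extremely small forces the exponential polynomial attached to $F$ to be very small at many additional lattice points, and combining this with a crude upper bound from the construction, one concludes that $F$ actually vanishes at those further points as well. Iterating this step enlarges the set of zeros until one may invoke a zero estimate on the torus $\mathbb{G}_m^n$ (Philippon's or W\"{u}stholz's multiplicity estimate). That estimate forces $F$ to vanish on a proper algebraic subgroup of the torus, contradicting the multiplicative independence of $\alpha_1,\ldots,\alpha_n$.

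The main obstacle, were one to push for the exact constants in the statement, is the quantitative bookkeeping: one must choose the parameters $L_j$ and $T$ essentially optimally, quote a zero estimate with fully explicit constants, and keep track of both the archimedean and non-archimedean contributions with enough precision to retain the factors $A_j=\max\{D\h(\alpha_j),\abs{\log\alpha_j}\}$ (rather than the cruder $D\h(\alpha_j)$ alone) in the final lower bound. All of this delicate optimization is carried out in \cite{Mat}, so for the purposes of the present paper one simply quotes the lemma and feeds it into the arguments of the next sections.
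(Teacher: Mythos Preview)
Your proposal is correct: the paper does not prove this lemma at all but simply quotes it from Matveev \cite[Theorem 2.2]{Mat}, exactly as you anticipated. Your sketch of the Baker--Matveev scheme is a reasonable gloss, but it goes well beyond what the paper does, which is purely to state the result and cite it.
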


\begin{remark}
When $\OK=\Q$, Aleksentsev's result in \cite{Ale} gives a better estimate.
However, the upper bound for our problem derived by Aleksentsev's result would be still considerably large.
\end{remark}

Let $$\Lm=x_1\theta_1+\cdots x_n\theta_n$$ be a linear form with $n\geq 2$, $x_1, \ldots, x_n\in\Z$, and $\theta_1, \ldots, \theta_n\in\R$.
Let $M=(m_{ij})$ be the $n$-th order square matrix defined by $m_{ij}=0$ for $1\leq i\leq n-1$, $1\leq j\leq n$ with $i\neq j$,
$m_{11}=\cdots =m_{n-1, n-1}=\gamma$, and $m_{ni}=\floor{C\gamma\theta_i}$ for $i=1, \ldots, n$,
where $C$ and $\gamma$ are constants chosen later,
and $\Bv_i$ be the $i$-th column vector $M$.
Let $l(M)$ be the the shortest length of vectors in the lattice generated by the column vectors
$\Bv_1$, $\ldots$, $\Bv_n$ of $M$.

We prove the following relation of a lower bound for $\abs{\Lm}$ and a lower bound for $l(M)$.
This is implicit in the proof of Lemma 3.7 of \cite{Weg2} and used in our previous work \cite{Ymd}.
However, we would like to make it explicit here and give its proof.
\begin{lemma}\label{lm22}
If $X_1$ be a real number such that $l(M)>X_1\sqrt{(n+1)^2+(n-1)\gamma^2}$ and $X_1\geq \max\{x_1, \ldots, x_n\}$, then $\abs{\Lm}>X_1/(C\gamma)$.
\end{lemma}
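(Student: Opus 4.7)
The plan is to argue by contradiction: assume $|\Lambda|\leq X_1/(C\gamma)$ and produce a nonzero lattice vector shorter than $l(M)$, contradicting the hypothesis. The natural candidate is the vector
$$\Bv=\sum_{j=1}^n x_j \Bv_j,$$
which lies in the lattice generated by the columns of $M$. Since I'm given $(x_1,\ldots,x_n)$ as the coefficients appearing in $\Lambda$, these are the right multipliers to use.

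Next I would read off the coordinates of $\Bv$ from the shape of $M$. The first $n-1$ coordinates are simply $\gamma x_1,\ldots,\gamma x_{n-1}$, each bounded in absolute value by $\gamma X_1$, so their squared contribution to $\|\Bv\|^2$ is at most $(n-1)\gamma^2 X_1^2$. The last coordinate equals
$$\sum_{j=1}^n x_j \floor{C\gamma\theta_j}=C\gamma\Lm-\sum_{j=1}^n x_j\{C\gamma\theta_j\},$$
so by the assumed bound on $|\Lm|$ together with $|\{C\gamma\theta_j\}|<1$ and $|x_j|\leq X_1$, it is bounded in absolute value by $C\gamma\cdot X_1/(C\gamma)+nX_1=(n+1)X_1$. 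Hence
$$\|\Bv\|^2\leq(n-1)\gamma^2 X_1^2+(n+1)^2 X_1^2 = X_1^2\bigl((n+1)^2+(n-1)\gamma^2\bigr),$$
and taking square roots gives $\|\Bv\|\leq X_1\sqrt{(n+1)^2+(n-1)\gamma^2}$, strictly less than $l(M)$ by hypothesis.

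It remains to check that $\Bv\neq 0$, since $l(M)$ is the minimum length over \emph{nonzero} lattice vectors. The first $n-1$ coordinates of $\Bv$ vanish only if $x_1=\cdots=x_{n-1}=0$; and in that case the last coordinate is $x_n\floor{C\gamma\theta_n}$, so $\Bv=0$ forces either $x_n=0$ (hence $\Lm=0$, in which case there is nothing to prove, since either the conclusion is trivially satisfied vacuously or the statement is understood for nontrivial $(x_j)$) or $\floor{C\gamma\theta_n}=0$, an easily avoidable degenerate configuration in applications (one simply ensures $C\gamma|\theta_n|\geq 1$ when choosing $C,\gamma$). Under these mild provisos, $\Bv\neq 0$, which produces the promised contradiction with the definition of $l(M)$.

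The argument is essentially a bookkeeping exercise, and no step looks genuinely hard; the only place requiring care is the bound on the last coordinate, where one must not forget the $n$-term error coming from the floor functions and must combine it correctly with the contribution $C\gamma|\Lm|$. The coefficient $(n+1)$ in $(n+1)^2$ comes precisely from this combination, so writing that step out cleanly is the only real item to be careful about.
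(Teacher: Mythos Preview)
Your proposal is correct and follows essentially the same approach as the paper: both use the lattice vector $\Bv=\sum_j x_j\Bv_j$, bound its first $n-1$ coordinates by $\gamma X_1$, and control the last coordinate via $|\tilde\Lambda-C\gamma\Lambda|<nX_1$. The only cosmetic difference is that the paper argues directly (from $|\Bv|\geq l(M)$ it deduces $|\tilde\Lambda|>(n+1)X_1$ and hence $C\gamma|\Lambda|>X_1$), whereas you run the same inequalities as a contradiction; you also make explicit the nonvanishing of $\Bv$, which the paper leaves implicit.
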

\begin{proof}
We proceed as in the proof of Lemma 3.7 of \cite{Weg2}.
We put
$$\Bv=\sum x_i \Bv_i=\begin{pmatrix}\gamma x_1 \\ \vdots \\ \gamma x_{n-1} \\ \tilde\Lm \end{pmatrix},$$
where $\tilde\Lm=x_1\floor{C\gamma\theta_1}+\cdots +x_n\floor{C\gamma\theta_n}$.
Then
\begin{equation}
\abs{\Bv}^2=\gamma^2 \sum_{i=1}^{n-1}x_i^2+\tilde\Lm^2
\end{equation}
and
\begin{equation}
\abs{\tilde\Lm-\gamma C\Lm}\leq \sum_{i=1}^n \abs{x_i} \abs{\floor{C\gamma\theta_i}-C\gamma\theta_i}< \sum_{i=1}^n \abs{x_i}\leq nX_1.
\end{equation}
By the assumption, we have
\begin{equation}
\abs{\gamma C \Lm}\geq \abs{\tilde\Lm}-\abs{\tilde\Lm-\gamma C\Lm}\geq \sqrt{l(M)^2-(n-1)\gamma^2 X_1^2}-nX_1\geq X_1,
\end{equation}
which proves the lemma.
\end{proof}

Finally, we prove the following fact.
\begin{lemma}\label{lm23}
$\kappa_1\in\Q(\alpha_1)$.
\end{lemma}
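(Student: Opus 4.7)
The plan is to exploit Galois theory together with the uniqueness of the representation~\eqref{eq12}. Let $L = \Q(\alpha_1, \ldots, \alpha_t)$ be the splitting field of the characteristic polynomial $P(X)$ over $\Q$; since $L$ is a splitting field, the extension $L/\Q$ is Galois, and $\Q(\alpha_1)$ is precisely the fixed field of the subgroup $H = \operatorname{Gal}(L/\Q(\alpha_1))$. It therefore suffices to show that $\sigma(\kappa_1) = \kappa_1$ for every $\sigma \in H$.

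I would first invoke the standard uniqueness of the generalized Binet expansion: if $\sum_{i=1}^t p_i(n) \alpha_i^n = 0$ for all $n \geq 0$ with each $p_i$ a polynomial of degree less than $d_i$, then each $p_i$ is identically zero. This follows from evaluating the identity at $n = 0, 1, \ldots, r-1$ and noting that the resulting coefficient matrix is a nonsingular confluent Vandermonde matrix in the distinct $\alpha_i$. Now fix $\sigma \in H$. Since $\sigma$ permutes the roots of $P(X)$ preserving multiplicities and fixes $\alpha_1$, it induces a permutation $\pi$ of $\{1, \ldots, t\}$ with $\pi(1) = 1$ and $d_{\pi(i)} = d_i$. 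Applying $\sigma$ coefficient-wise to \eqref{eq12} and using $\sigma(u_n) = u_n$ (since $u_n \in \Z$) gives $u_n = \sum_{i=1}^t q_i^\sigma(n) \alpha_{\pi(i)}^n$, where $q_i^\sigma$ denotes the polynomial obtained from $q_i$ by applying $\sigma$ to its coefficients. Reindexing and comparing with~\eqref{eq12} via the uniqueness statement yields $q_i = q_{\pi^{-1}(i)}^\sigma$ for every $i$; in particular, setting $i = 1$ gives $\kappa_1 = q_1 = q_1^\sigma = \sigma(\kappa_1)$, as required.

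The only nontrivial ingredient is the uniqueness statement, and it is quite routine; once it is in hand, the Galois-theoretic step runs automatically, so there is no real obstacle. An alternative and perhaps more concrete proof would proceed via the partial fraction decomposition of the generating function $\sum_{n \geq 0} u_n X^n = G(X)/P^*(X)$, where $P^*(X) = X^r P(1/X) \in \Q[X]$ and $G \in \Q[X]$: the residue at the simple pole $X = 1/\alpha_1$ automatically lies in $\Q(\alpha_1)$, and it equals $\kappa_1$ up to an explicit factor, giving $\kappa_1 \in \Q(\alpha_1)$ directly.
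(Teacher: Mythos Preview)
Your proof is correct, but it proceeds along a different line from the paper's. The paper argues via explicit determinants: it writes down the confluent Vandermonde matrix $A$ arising from the initial conditions, expresses $\kappa_1$ by Cramer's rule as $\kappa_1=\sum_i B_i u_{i-1}$ with $B_i=\det A_{i1}/\det A$, and then observes that each $B_i$ is a rational function of the $\alpha_j$ with rational coefficients which is symmetric in the conjugates $\alpha_2,\ldots,\alpha_{r_0}$ of $\alpha_1$ and symmetric within every other Galois orbit, so that $B_i\in\Q(\alpha_1)$. In effect, the paper carries out the Galois-invariance check by hand on the determinantal formula.

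Your argument bypasses the explicit formula entirely: you note that the $\kappa_j$ all lie in the splitting field $L$ (implicitly, since the confluent Vandermonde system over $L$ is nonsingular), and then let $\operatorname{Gal}(L/\Q(\alpha_1))$ act on the Binet representation, using uniqueness of that representation to conclude that $q_1$ is fixed. This is cleaner and more conceptual; it also makes transparent that the result holds for every coefficient of $q_1$, not just $\kappa_1$, without any additional work. The partial-fraction remark you give at the end is yet a third route, and arguably the quickest of all when $d_1=1$ (as is assumed in the application), since the residue of a rational function in $\Q(X)$ at a simple pole $1/\alpha_1$ visibly lies in $\Q(\alpha_1)$. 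The only small point worth making explicit in your main argument is that the $\kappa_j$ lie in $L$ to begin with, so that applying $\sigma$ to them is legitimate; you have the ingredients for this (the nonsingularity of the confluent Vandermonde), but it is left implicit.
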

\begin{proof}
We may assume that $\alpha_1, \ldots, \alpha_{r_0}$ are the conjugates of $\alpha_1$ without the loss of generality.
Let $A$ be the extended vandermonde matrix of order $r$ with its $n$-th row consisting of $n^k \alpha_i^n$ for $i=1, \ldots, t$ and $k=0, \ldots, d_i-1$
and $A_{ij}$ be its $(i, j)$-cofactor.
Then, $\kappa_1$ can be written in a linear form $\kappa_1=B_1 s_0+\cdots +B_r s_{r-1}$ of $s_0, \ldots, s_{r-1}$,
where each $B_i=\det A_{i1}/\det A$.

We see that $\det A$ can be represented as a polynomial of $n^k \alpha_\ell^n$'s with $\ell=1, \ldots, t$, $k=0, \ldots, d_i-1$ alternating over $(k, l)'s$
and each $\det A_{i1}$ can be represented as a polynomial of $n^k \alpha_\ell^n$'s with $\ell=2, \ldots, t$ and $k=0, \ldots, d_i-1$ alternating over $(k, l)'s$
(explicit constructions for $\det A$ are given in \cite{Bar} and \cite{FlH}).

Hence, each $B_i$ can be written in a rational function of $\alpha_1$, $\ldots$, $\alpha_t$ with rational coefficients
which is symmetric on $\alpha_i$'s in other conjugate classes than the class of $\alpha_1$ and symmetric on $\alpha_2, \ldots, \alpha_{r_0}$.
In other words, each $B_i$ can be written in a rational function of $\alpha_2, \ldots, \alpha_{r_0}$ whose coefficients belongs to $\Q(\alpha_1)$.
This means that each $B_i$ itself belongs to $\Q(\alpha_1)$ and so does $\kappa_1$.
\end{proof}

\section{A preliminary upper bound}

Let $(u_n)$ be an $r$-th order linear recurrence sequence $u_n$ satisfying \eqref{eq11}
and assume that the characteristic roots $\alpha_1, \ldots, \alpha_t$ satisfy
$\abs{\alpha_1}>\abs{\alpha_2}\geq \cdots \abs{\alpha_t}$.
We write $u_n$ in the form \eqref{eq12}.

\begin{lemma}\label{lm31}
Let $m_1$, $\ldots$, $m_k$ be rational integers such that $m_1$, $\ldots$, $m_k$, $\alpha_1$, and $\kappa_1$ are multiplicatively independent and put
$A= \max\{\log m_1, \ldots, \log m_k, \\ \h(\alpha_1), \h(\kappa_1)\}$.
Let $n_0\geq 1$, $K>0$, $\delta>1$ be real numbers such that
\begin{equation}\label{eq30}
\abs{\frac{u_n}{\kappa_1\alpha_1^n}-1}<K\delta^{-n}
\end{equation}
for $n\geq n_0$ (we can take such real numbers $K$ and $\delta$ since $\abs{\alpha_1}>\abs{\alpha_2}$)
and $n_0>A/c_2$, where we put
$$c_2 =c_1 \max\{\h(\alpha_1), \h(\kappa_1)/n_0, \log \alpha_1+\max\{0, \log (K \kappa_1 (1+\epsilon))\}/n_0\}.$$ Then, we write $\epsilon=K\delta^{-n_0}$ and $\epsilon_1=\epsilon/(1-\epsilon)$.

Moreover, we put $D$ to be the degree of the field $\Q(\alpha_1)$ over $\Q$ and, for each $s=1, \ldots, k$,
\begin{equation*}
\begin{split}
A^{(s)}= & ~ \max\{\log m_1, \ldots, \log m_s, \h(\alpha_1), \h(\kappa_1)\}, \\
\Psi^{(s)} = &  ~(\log m_1) \cdots (\log m_s)\h(\alpha_1) \h(\kappa_1), \Psi^{\prime (s)}=\Psi / A^{(s)}, \\
c_3^{(s)} = & ~ \frac{\log((1+\epsilon_1)K)}{\Psi^{(s)} \log (c_2 n_0/A^{(s)})}, \\
C^{\prime (s)}= & ~ \frac{C(s+2) D^{s+2} +c_3^{(s)}}{\log\delta},
\end{split}
\end{equation*}
and
$C^{\prpr (s)}=C^{\prime (s)} \eta_1$, where we put $\eta_1$ to be the constant satisfying
$$\frac{\eta_1 Y\log Y}{\log(\eta_1 Y\log Y)}=Y$$
with $Y=c_2 C^{\prime (s)} \Psi^{\prime (s)}$.
We simply write $\Psi=\Psi^{(k)}$ and so on.

If $n\geq n_0$ and $u_n=m_1^{e_1} \cdots m_k^{e_k}$ for some integers $e_1, \ldots e_k$, then
\begin{equation}\label{eq31}
n<C^\prpr \Psi \log (c_2 C^\prime \Psi^\prime).
\end{equation}
\end{lemma}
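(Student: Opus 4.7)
The strategy is to set up a linear form in logarithms from the two expressions for $u_n$ and then compare the upper bound coming from the approximation \eqref{eq30} with Matveev's lower bound (Lemma \ref{lm21}). First I would use \eqref{eq30} to write $u_n = \kappa_1 \alpha_1^n (1 + \rho_n)$ with $\abs{\rho_n} < K \delta^{-n} \leq \epsilon$ for $n \geq n_0$. Taking logarithms of the hypothesis $u_n = m_1^{e_1} \cdots m_k^{e_k}$ then yields the linear form
\[
\Lm = e_1 \log m_1 + \cdots + e_k \log m_k - n \log \alpha_1 - \log \kappa_1 = \log(1 + \rho_n).
\]
The elementary inequality $\abs{\log(1+x)} \leq \abs{x}/(1 - \abs{x})$ for $\abs{x}<1$ then gives $\abs{\Lm} \leq (1+\epsilon_1) K \delta^{-n}$, so $\log \abs{\Lm} < -n \log \delta + \log((1+\epsilon_1) K)$.

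Next I would apply Lemma \ref{lm21} to $\Lm$. The hypothesis that $m_1, \ldots, m_k, \alpha_1, \kappa_1$ are multiplicatively independent ensures $\Lm \neq 0$ since $n \geq n_0 \geq 1$, so the lower bound applies. For the quantities $A_j$, using $\h(m_j) = \log m_j$ together with $\abs{\log \alpha_1} \leq D\h(\alpha_1)$ (and similarly for $\kappa_1$, which lies in $\Q(\alpha_1)$ by Lemma \ref{lm23}), one obtains $\Omega \leq D^{k+2}\Psi$. To bound the Matveev parameter $B$, I would exploit multiplicative independence again: writing the system $v_p(u_n) = \sum_i e_i v_p(m_i)$ over the primes $p$ dividing some $m_i$ shows the $(v_p(m_i))$-matrix has rank $k$, giving $\abs{e_i} = O(\log u_n) = O(n)$. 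Combining with $\log u_n \leq n \log \alpha_1 + \log(K \kappa_1 (1+\epsilon))$ (from \eqref{eq30}) yields the specific estimate $c_1 B \leq c_2 n/A$ for the $c_2$ defined in the statement. Matveev's bound then reads $\log \abs{\Lm} > -C(k+2) D^{k+2} \Psi \log(c_2 n / A)$.

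Combining the upper and lower bounds gives $n \log \delta \leq C(k+2) D^{k+2} \Psi \log(c_2 n/A) + \log((1+\epsilon_1) K)$. The constant $c_3$ is tailored precisely so that $c_3 \Psi \log(c_2 n_0 / A) = \log((1+\epsilon_1) K)$; since $n \geq n_0 > A/c_2$ the additive term is absorbed into the main one, yielding the implicit bound $n \leq C' \Psi \log(c_2 n/A)$. Setting $Y = c_2 n / A$ rewrites this as $Y \leq (c_2 C' \Psi') \log Y$, and the defining equation of $\eta_1$ converts this implicit bound into the explicit bound $n < C^\prpr \Psi \log(c_2 C' \Psi')$ claimed in the lemma.

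The principal technical obstacle lies in the bookkeeping: first, verifying that the crude estimate $\abs{e_i} = O(n)$ really sharpens to the exact inequality $c_1 B \leq c_2 n/A$ matching the $c_2$ prescribed; and second, verifying that the $\eta_1$-trick indeed transforms the implicit inequality $n \leq C' \Psi \log(c_2 n /A)$ into the explicit form of the statement. Everything else — the application of Matveev, the estimate of $\Omega$, and the upper bound for $\abs{\Lm}$ — is a direct, if somewhat lengthy, computation.
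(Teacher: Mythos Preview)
Your outline matches the paper's proof in structure: set up $\Lambda_0=\sum e_i\log m_i-\log\kappa_1-n\log\alpha_1$, bound it above by $(1+\epsilon_1)K\delta^{-n}$ via \eqref{eq30} and $|\log(1+x)|<|x|/(1-|x|)$, bound it below by Matveev, and then solve the resulting implicit inequality using the $\eta_1$-device. Two points where the paper differs from your sketch. First, for the $B$-estimate the paper does not invert any valuation matrix; it simply uses $e_i\log m_i\leq\log u_n$ together with $\log u_n\leq n\log\alpha_1+\log(\kappa_1(1+\epsilon))$, which directly yields $c_1 B\leq c_2 n/A^{(s)}$ with exactly the stated $c_2$ --- so your ``principal technical obstacle'' dissolves, at the price of the implicit assumption $e_i\geq 0$ (harmless in the applications, where the $m_i$ are distinct primes). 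Second, the paper lets $s$ be the largest index with $e_s\neq 0$, applies Lemma~\ref{lm21} with $s+2$ numbers and $A_n=DA^{(s)}$, and only at the end observes that the resulting bound is increasing in $s$, so the $s=k$ case \eqref{eq31} covers everything; your direct application with all $k+2$ numbers gives the same conclusion but hides the reason for the $A^{(s)}$, $\Psi^{(s)}$ notation in the statement.
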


\begin{remark}
If $P(X)$ has no double root, then we have $t=r$, $u_n=\sum_{i=1}^r \kappa_i \alpha_i^n$ for $n=0, 1, \ldots$,
and we can put $\delta=\abs{\alpha_1/\alpha_2}$ and
\begin{equation}
K=\frac{\abs{\kappa_2}+\cdots +\abs{\kappa_r}}{\abs{\kappa_1}}.
\end{equation}
\end{remark}

\begin{proof}
We put
$\Lm_0=e_1\log m_1+ \cdots +e_k\log m_k-\log\kappa_1-n\log\alpha_1=\log(u_n/\kappa_1\alpha_1^n)$.
It is clear that
\begin{equation}
\abs{\frac{u_n}{\kappa_1\alpha_1^n}-1}\leq \abs{\frac{\kappa_2}{\kappa_1}\left(\frac{\alpha_2}{\alpha_1}\right)^n+\cdots +\frac{\kappa_r}{\kappa_1}\left(\frac{\alpha_r}{\alpha_1}\right)^n}\leq K\delta^{-n}.
\end{equation}
We observe that
\begin{equation}
\abs{\log(1+x)}<\frac{\abs{x}}{1-\abs{x}}
\end{equation}
for $\abs{x}<1$ to obtain
\begin{equation}\label{eq32}
\abs{\Lm_0}<\frac{K\delta^{-n}}{1-K\delta^{-n}}\leq (1+\epsilon_1)K\delta^{-n}.
\end{equation}

Since $u_n\neq \kappa_1 \alpha_1^n$ by assumption, we have $\Lm_0\neq 0$.
We note that $D\h(\alpha_1)\geq \log\abs{\alpha_1}=\abs{\log\alpha_1}$
and $D\h(\kappa_1)\geq \log\abs{\kappa_1}=\abs{\log\kappa_1}$.
Moreover, $n\geq n_0>0$ by assumption.

Now we put $s$ to be the largest index $i$ such that $e_i>0$.
Since $\Q(\alpha_1)$ is a real field and $\kappa\in\Q(\alpha_1)$ by Lemma \ref{lm23},
we can apply Lemma \ref{lm21} for $\Lm_0$ with $\OK=\Q(\alpha_1)$, $n=s+2$, and $A_n=DA$ to obtain
\begin{equation}\label{eq33}
-\log\abs{\Lm_0}<C(s+2) D^{s+2} \Psi \log (c_1 B^{(s)}),
\end{equation}
where we observe that $\Omega=D^{s+2}\Psi$ and
\begin{equation}\label{eq34}
\begin{split}
B^{(s)}= ~ & \frac{\max\{e_1\log m_1, \ldots, e_k\log m_k, \h(\kappa_1), n\h(\alpha_1)\}}{A^{(s)}} \\
\leq ~ & \frac{\max\{n\h(\alpha_1), \h(\kappa_1), \log u_n\}}{A^{(s)}}
\leq \frac{c_2 n}{c_1 A^{(s)}}.
\end{split}
\end{equation}

From \eqref{eq32}, \eqref{eq33}, and \eqref{eq34}, we obtain
\begin{equation}
n\log\delta<\log (2K_1)+C(s+2) D^{s+2} \Psi^{(s)}\log \left(\frac{c_2 n}{A^{(s)}}\right),
\end{equation}
which immediately gives that
\begin{equation}
\frac{c_2 n}{A^{(s)}}<c_2 C^{\prime (s)} \Psi^{\prime (s)} \log \left(\frac{c_2 n}{A^{(s)}}\right).
\end{equation}
By the definition of $C^{\prpr (s)}$, we have
\begin{equation}
\frac{c_2 n}{A^{(s)}}<c_2 C^{\prpr (s)} \Psi^{\prime (s)} \log (c_2 C^{\prime (s)} \Psi^{\prime (s)})
\end{equation}
and therefore
\begin{equation}
n<C^{\prpr (s)} \Psi^{(s)} \log (c_2 C^{\prime (s)} \Psi^{\prime (s)}).
\end{equation}
We can easily see that the resulting constants $C^{\prpr (s)} \Psi^{(s)}$ and
$C^{\prime (s)}\Psi^{\prime (s)}$ grow as $s$ becomes large to obtain \eqref{eq31} as desired.
\end{proof}

\section{General upper bounds}

Let $p_1$, $\ldots$, $p_k$ be distinct primes and $x_0$ be the smallest prime other than them.
We take an arbitrary constant $\mu>1$.
We put $p_s$ to be the largest prime among $p_1$, $\ldots$, $p_k$ such that $g_s\neq 0$,
$A= \max\{\log p_s, \h(\alpha_1), \h(\kappa_1)\}$, and $C_0=\mu/(c_1\h(\alpha_1))$,
and $$\Psi=(\log p_1) \cdots (\log p_k)\h(\alpha_1)\h(\kappa_1).$$

\begin{theorem}\label{th41}
Let $n_1\geq 1$, $K>0$, $\delta>1$ be real numbers satisfying \eqref{eq30} for $n\geq n_1$
and $n_1>C_0 A$.
We write $\epsilon=K\delta^{-n_1}$ and assume that $\epsilon<1$.
Moreover, we write $\epsilon_1=\epsilon/(1-\epsilon)$
and $\epsilon_2=\max\{0, \log \kappa_1/(n_1\log\alpha_1)\}$.

We take $C_1^\prime=c_2 C^\prime$ with
$$C^\prime=\frac{C(k+2)D^{k+2}+\log((1+\epsilon_1)K)/(\Psi\log\mu)}{\log\delta}$$
and $C_1=C^\prpr$ obtained from $C^\prime$ as in Lemma \ref{lm31}.

Nextly, we put $C_2^\prime=c_2 C^\prime$ with
$$C^\prime=\frac{C(k+3)D^{k+3}+\log((1+\epsilon_1)K)/(\Psi(\log x_0)(\log\mu))}{\log\delta},$$
$C_2=C^\prpr$ obtained from $C^\prime$ as in Lemma \ref{lm31}, and $C_3=C_2 (1+\epsilon)$.
Moreover, We put
\begin{equation*}
\begin{split}
C_4= & ~ C(k+2)D^{k+2}, \\
C_5^\prime= & ~ c_1 \frac{C_3 \Psi^2 (\log\alpha_1) \log^2 (C_2^\prime \Psi)\max\{C_2 \h(\alpha_1), C_3\log\alpha_1\}}{A}, \\
C_5= & ~ C_4+
\frac{\log(2(1+\epsilon_1)KC_3 \Psi \log \alpha \log(C_2^\prime \Psi))}
{2C_4\Psi(\log C_5^\prime)(\log\delta)}, \\
C_6= & ~ 2C_5(1+\epsilon)(1+\epsilon_2)\log\alpha_1, \\
C_7= & ~ \eta_2 C_6,
\end{split}
\end{equation*}
where we put $\eta_2$ to be the constant satisfying
$$\frac{\eta_2 Y\log Y}{\log(\eta_2 Y\log Y)}=Y$$
with $Y=C_6\Psi$.

If $n>m\geq n_1$ and $u_n^a=p_1^{g_1} \cdots p_k^{g_k} u_m^b$
for some integers $g_1, \ldots, g_k, a, b$ with $a>0$ and $b\geq 0$, then we have either
\begin{equation}
m\leq n<2\max\{C_0, C_2 \Psi \log(C_2^\prime \Psi)\} C_5^\prime,
\end{equation}
or
\begin{equation}
m<2C_5\Psi\log(C_7\Psi\log(C_6\Psi))
\end{equation}
and
\begin{equation}
n<\max\{C_0, C_2 \Psi \log(C_2^\prime \Psi)\} C_7 \Psi \log(C_6\Psi).
\end{equation}
\end{theorem}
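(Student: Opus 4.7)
The plan is to apply Matveev's theorem (Lemma~\ref{lm21}) to a linear form in logarithms derived from the equation $u_n^a=p_1^{g_1}\cdots p_k^{g_k}u_m^b$, and to combine this with two applications of the preliminary bound in Lemma~\ref{lm31}. Assuming $\gcd(a,b)=1$ without loss of generality, for any prime $q\notin\{p_1,\ldots,p_k\}$ the equation forces $av_q(u_n)=bv_q(u_m)$, so the parts of $u_n$ and $u_m$ coprime to $p_1,\ldots,p_k$ must equal $W^b$ and $W^a$ respectively for a common positive integer $W$. Two cases arise: if $W=1$, then $u_n$ is itself a product of $p_j$'s and Lemma~\ref{lm31} applies with $k$ multipliers ($k+2$ logs in Matveev), yielding the $C_1$ bound on $n$; if $W\geq x_0$, then $u_n$ is a product of $p_j$'s together with $W$, and Lemma~\ref{lm31} with $k+1$ multipliers ($k+3$ logs in Matveev, with $\log x_0\leq\log W$ supplying the factor in the denominator of the analogue of $c_3^{(s)}$) yields the $C_2$ bound. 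Since $C_2$ dominates $C_1$, I obtain in either case $n\leq\max\{C_0,C_2\Psi\log(C_2^\prime\Psi)\}$ up to multiplicative slack.

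For the complementary bound on $m$, I would take logarithms in the equation and substitute $\log u_n=\log\kappa_1+n\log\alpha_1+\eta_n$ with $|\eta_n|\leq(1+\epsilon_1)K\delta^{-n}$ (which follows from \eqref{eq30} via $|\log(1+x)|\leq|x|/(1-|x|)$), and similarly for $u_m$. This produces the linear form
$$\Lambda=(a-b)\log\kappa_1+(an-bm)\log\alpha_1-\sum_{j=1}^k g_j\log p_j$$
together with the exponentially small upper bound $|\Lambda|\leq 2\max(a,b)(1+\epsilon_1)K\delta^{-m}$. I would verify $\Lambda\neq 0$ using the hypothesis that $\kappa_i\neq 0$ for some $i\geq 2$, handling the degenerate case separately by direct inspection of the ratio $u_n/(\kappa_1\alpha_1^n)$. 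Applying Lemma~\ref{lm21} to $\Lambda$ with $k+2$ logarithms gives $-\log|\Lambda|\leq C(k+2)D^{k+2}\Psi\log(c_1 B)$, where the parameter $B=O(an)$ follows from the trivial estimates $|g_j|\log p_j\leq(a+b)\log u_n$, $|an-bm|\leq 2an$, and $|a-b|\leq a+b$, together with $\log u_n\leq n(1+\epsilon_2)\log\alpha_1$.

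Combining the two bounds on $|\Lambda|$ and inserting the first-stage estimate on $n$ yields $m<2C_5\Psi\log(C_7\Psi\log(C_6\Psi))$, the second alternative of the theorem; back-substitution into the first-stage estimate then delivers the companion bound $n<\max\{C_0,C_2\Psi\log(C_2^\prime\Psi)\}\cdot C_7\Psi\log(C_6\Psi)$. The first alternative $m\leq n<2\max\{C_0,C_2\Psi\log(C_2^\prime\Psi)\}\cdot C_5^\prime$ arises when the Matveev step on $\Lambda$ is vacuous (for instance when $a$ is small, so that the lower bound on $|\Lambda|$ gives no nontrivial constraint on $m$), in which case the bound on $n$ is the first-stage estimate inflated by the multiplicative factor $C_5^\prime$. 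The constants $C_3,C_5,C_6,C_7$ and the parameters $\eta_1,\eta_2$ arise from inverting implicit inequalities of the shape $Y\leq Y_0\log Y$ via the stated fixed-point formulas.

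The main technical obstacle will be the circular dependency between $m$ and $n$ in the estimate for $\Lambda$: $n$ enters $\log B$ when bounding $m$, while $m$ enters $\delta^{-m}$ in the upper bound on $|\Lambda|$. Disentangling them requires stacking the two estimates with care and tracking every $\epsilon$-slack term (from $\epsilon,\epsilon_1,\epsilon_2$) so that the resulting explicit constants coincide with those in the statement.
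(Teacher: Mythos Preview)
Your overall architecture---factor out the $p_i$-free part so that $u_m=p_1^{e_1}\cdots p_k^{e_k}W^a$ and $u_n=p_1^{f_1}\cdots p_k^{f_k}W^b$, apply Lemma~\ref{lm31} with the extra multiplier $W$, then apply Matveev to the form $\Lambda$---is the same as the paper's. But your first stage contains a real gap. Lemma~\ref{lm31} with $k+1$ multipliers does \emph{not} yield $n\leq\max\{C_0,C_2\Psi\log(C_2'\Psi)\}$: the $\Psi$ in that lemma picks up an extra factor $\log W$, so what one actually gets is
\[
\max\{m,n\}\leq \max\{C_0,\;C_2\Psi\log(C_2'\Psi)\}\cdot\log W,
\]
a bound that still depends on the unknown $W$ (this is \eqref{eq43} in the paper, with $W=x$).

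That missing $\log W$ is not a nuisance but the pivot of the whole argument. From $W^a\leq u_m$ and $W^b\leq u_n$ one has $a\log W\leq(1+\epsilon)m\log\alpha_1$ and similarly for $b$; dividing by $\log W$ and inserting the first-stage bound gives an \emph{absolute} bound on $\max\{a,b\}$, namely \eqref{eq46}. Your proposal never bounds $a,b$, yet they sit in your upper estimate $|\Lambda|\leq 2\max(a,b)(1+\epsilon_1)K\delta^{-m}$ and in your Matveev parameter $B=O(an)$; without control on them the second stage produces nothing. In the paper, once $a,b$ are bounded one estimates $|g_i|$, $|bm-an|$, $|b-a|$ all as $O(\log W)$ with explicit constants, obtaining $c_1B\leq C_5'\log W$. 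The two alternatives of the theorem then come from the dichotomy $\log W<C_5'$ versus $\log W\geq C_5'$ (not from ``Matveev vacuous'' or ``$a$ small'' as you suggest): in the first case \eqref{eq43} already bounds $n$; in the second, Matveev gives $m<2C_5\Psi\log\log W$, which combined with $a\log W\leq(1+\epsilon)m\log\alpha_1$ yields an inequality $a\log W<C_6\Psi\log(a\log W)$ that inverts to a bound on $a\log W$, and hence on $\log W$, closing the loop via \eqref{eq43} and \eqref{eq48}.
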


We begin by observing that $u_m, u_n>(1-\epsilon)\kappa_1\alpha_1^n>0$ from the assumption that
$n>m\geq n_1$ and $\epsilon<1$.
Moreover, we may assume that $\gcd(a, b)=1$.
Since $p_1, \ldots, p_k$ are distinct primes, we have
\begin{equation}\label{eq41}
u_m=p_1^{e_1} \cdots p_k^{e_k} x^a
\end{equation}
and
\begin{equation}\label{eq42}
u_n=p_1^{f_1} \cdots p_k^{f_k} x^b
\end{equation}
for some integer $x$ not divisible by $p_1$, $\ldots$, or $p_k$.
Indeed, we can write $u_m=p_1^{e_1} \cdots p_k^{e_k} U$ and
$u_n=p_1^{f_1} \cdots p_k^{f_k} V$ with $\gcd(U, p_1 \cdots p_k)=\gcd(V, p_1 \cdots p_k)=1$.
Then, we observe that $U^b=V^a$ and therefore $U=x^a$ and $V=x^b$ for some integer $x$.

If $x=1$ or $b=0$, then Lemma \ref{lm31} with $n_0=n_1$ gives that $m<n<C_1 \Psi \log (C_1^\prime \Psi)$,
where we note that $n_0\geq \mu A/c_2>A/c_2$ and $c_3\leq\log((1+\epsilon_1)K)/(\Psi\log\mu)$.

Hence, we may assume that $x\geq 2$ and $b>0$.
We put $A^\prime=\max\{A, \log x\}$.
Now, we would like to apply Lemma \ref{lm31} with $k+1$, $\Psi\log x$, and $A^\prime$
in place of $k$, $\Psi$, and $A$ respectively and with
\begin{equation*}
n_0=\max\{n_1, C_0 A^\prime\}, m_i= p_i ~ (i=1, \ldots, k), m_{k+1}=x.
\end{equation*}
Then, we see that $n_0\geq \mu A^\prime/c_2>A^\prime/c_2$ and, either $e_{k+1}=a>0$ or $e_{k+1}=b>0$ holds.
Moreover, we have $c_3\leq \log((1+\epsilon_1)K)/(\Psi(\log x_0)(\log\mu))$ in Lemma \ref{lm31}.
Hence, we can apply Lemma \ref{lm31} to obtain
\begin{equation}
\max\{m, n\}\leq \max\{n_0, C_2 \Psi(\log x)\log(C_2^\prime \Psi)\}.
\end{equation}
Since we have assumed that $n_1>C_0 A$, we have
\begin{equation}\label{eq43}
\max\{m, n\}\leq \max\{n_0, \max\{C_0, C_2 \Psi \log(C_2^\prime \Psi)\}\log x\}.
\end{equation}

Now, the theorem immediately follows when $\log x<C_5^\prime$.
Thus, we may assume that $\log x\geq C_5^\prime$.
We put
\begin{equation}
\begin{split}
\Lm= & ~ \log\left(\frac{u_m^b u_n^{-a}}{\kappa_1^{b-a}\alpha_1^{bm-an}}\right)
= \log\left(\frac{p_1^{g_1} \cdots p_k^{g_k}}{\kappa_1^{b-a}\alpha_1^{bm-an}}\right) \\
= & ~ \sum_{i=1}^k g_i\log p_i-(b-a)\log\kappa_1-(bm-an)\log\alpha_1.
\end{split}
\end{equation}
We can easily see that
\begin{equation}\label{eq44}
\abs{\Lm}\leq \frac{bK\delta^{-m}}{1-K\delta^{-m}}+\frac{aK\delta^{-n}}{1-K\delta^{-n}}
<(1+\epsilon_1)(a+b)K\delta^{-m}.
\end{equation}

If $\Lm=0$, then, from the assumption that $p_1, \ldots, p_k, \kappa_1, \alpha_1$ are
multiplicatively independent, we see that $b-a=bm-an=0$.
Since $m\neq n$, we must have $a=b=0$ while we have assumed that $a>0$.

Thus, we have $\Lm\neq 0$.
Hence, we can apply Lemma \ref{lm21} for $\Lm$ with $n=k+2$, $A_n=A$ noting that $g_s\neq 0$, and
\begin{equation}
B=\frac{\max\{g_1\log p_1, \ldots, g_k\log p_k, \abs{b-a}\h(\kappa_1), \abs{bm-an}\h(\alpha_1)\}}{A}
\end{equation}
to obtain
\begin{equation}
-\log\abs{\Lm}\leq C_4\Psi \log (c_1 B).
\end{equation}
Combining this bound with \eqref{eq44}, we have
\begin{equation}\label{eq45}
m\log \delta<\log((a+b)(1+\epsilon_1)K)+C_4\Psi \log (c_1 B).
\end{equation}

From \eqref{eq43}, we have
\begin{equation}\label{eq46}
\max\{a, b\}\leq C_3 \Psi (\log \alpha_1) \log(C_2^\prime \Psi)
\end{equation}
and the same upper bound also applies to $\abs{a-b}$ since $a$ and $b$ are both positive.
We can easily see that $g_i=be_i-af_i$ from \eqref{eq41} and \eqref{eq42} and
\begin{equation}
\abs{g_i}\leq C_3^2 \Psi_i \Psi (\log^2 \alpha_1) (\log x) \log^2 (C_2 \Psi)
\end{equation}
for $i=1, \ldots, k$, where $\Psi_i=\Psi/\log p_i$.
Moreover, combining \eqref{eq43} and \eqref{eq46}, we immediately see that
\begin{equation}
\abs{bm-an}\leq C_2 C_3 \Psi^2 (\log\alpha_1) (\log x) \log^2 (C_2 \Psi).
\end{equation}
Hence, we obtain
\begin{equation}
B\leq \frac{C_3 \Psi^2 (\log\alpha_1) (\log x) \log^2 (C_2^\prime \Psi)\max\{C_2 \h(\alpha_1), C_3\log\alpha_1\}}{A}
\end{equation}
and therefore $c_1 B\leq C_5^\prime \log x$.
From \eqref{eq46}, we see that $a+b<2C_3 \Psi (\log \alpha_1) \log(C_2^\prime \Psi)$.
Now \eqref{eq45} yields that
\begin{equation}\label{eq47}
m\log \delta<\log(2C_3(1+\epsilon_1)K\Psi (\log \alpha_1) \log(C_2^\prime \Psi))
+C_4\Psi \log (C_5^\prime \log x)
\end{equation}
and we conclude that
\begin{equation}\label{eq48}
m<C_5 \Psi\log(C_5^\prime \log x)<2C_5\Psi\log\log x.
\end{equation}

Now, observing that $u_m\geq x^a$ and $a\log x\leq\log u_m<(1+\epsilon)(\log\kappa_1+m\log\alpha_1)$,
we have
\begin{equation}\label{eq48a}
a\log x<C_6\Psi\log\log x<C_6\Psi\log(a\log x)
\end{equation}
and
\begin{equation}\label{eq49}
a\log x<C_7\Psi\log(C_6\Psi).
\end{equation}
Now, recalling the assumption that $a>0$, the theorem follows from \eqref{eq48} and \eqref{eq43} again.

\section{Proof of Theorem \ref{th2}}
We apply our method to the van der Laan-Padovan sequence.
We put $\alpha_1=1.324717\cdots $ be the plastic ratio, the real solution of the equation $X^3-X-1=0$,
$\alpha_2, \alpha_3=-0.662358\cdots \pm 0.562279\cdots i$ be the remaining solutions, which are mutually conjugate,
and $\kappa_i=1/(2\alpha_i+3)$ for $i=1, 2, 3$.
Now we can write $P_n=\kappa_1\alpha_1^n+\kappa_2\alpha_2^n+\kappa_3\alpha_3^n$.
We note that $K=5.599815\cdots$ and $\delta=1.524702\cdots$.

Now we assume that $m$ and $n$ are two nonnegative integers satisfying \eqref{eq13}
for some integers $g_1, g_2, g_3, g_4, a, b$ with $a\geq 0$.
Moreover, we may assume that $b>0$ without the loss of generality.

We begin by settling the special case $P_n=2^{f_1}3^{f_2}5^{f_3}7^{f_4}$.

\begin{lemma}\label{lm51}
If $P_n=2^{f_1}3^{f_2}5^{f_3}7^{f_4}$, then $n=0, \ldots, 18$, $20$, $25$, or $36$.
\end{lemma}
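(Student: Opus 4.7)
The plan is to follow the scheme set up in Sections~2 and~3: first apply Lemma~\ref{lm31} with $k=4$ and primes $2,3,5,7$ to obtain a large but explicit upper bound $N_0$ on $n$; then use Lemma~\ref{lm22} together with LLL to cut $N_0$ down to a tractable size; and finally verify the remaining range directly with the PARI-GP script.

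For the initial bound, one takes $k=4$, $D=3$, $m_i=p_i$ with $(p_1,p_2,p_3,p_4)=(2,3,5,7)$, $A=\log 7$, and $\Psi=(\log 2)(\log 3)(\log 5)(\log 7)\,\h(\alpha_1)\,\h(\kappa_1)$, and substitutes the numerical values of $\alpha_1$, $\kappa_1$, $K$, $\delta$ recorded just before the lemma into the constants $c_1,c_2,C',C''$ of Lemma~\ref{lm31}. Choosing $n_0$ large enough that $\epsilon=K\delta^{-n_0}$ is small while $n_0>A/c_2$ then yields via \eqref{eq31} an explicit $N_0$. Because $s+2=6$ algebraic numbers enter Matveev's theorem and $D=3$, the resulting $N_0$ is of order at least $10^{15}$, far too big for direct enumeration.

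To reduce $N_0$, we consider the linear form
\[
\Lm_0=f_1\log 2+f_2\log 3+f_3\log 5+f_4\log 7-\log\kappa_1-n\log\alpha_1=\log\bigl(P_n/(\kappa_1\alpha_1^n)\bigr),
\]
which by \eqref{eq32} satisfies $\abs{\Lm_0}<(1+\epsilon_1)K\delta^{-n}$. Apply Lemma~\ref{lm22} with the six $\theta_i$ being these logarithms, integer coefficients $(f_1,f_2,f_3,f_4,-1,-n)$ bounded by $X_1=N_0$, $\gamma=1$, and a constant $C$ large enough that an LLL-reduced basis of the associated lattice $M$ has shortest vector of length exceeding $\sqrt{54}\,X_1$. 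The conclusion $\abs{\Lm_0}>X_1/C$ then clashes with the exponential upper bound and forces $n<N_1$ of order $\log(CN_0)/\log\delta$, a few hundred at most; a second pass, if needed, reduces the bound further to some $N^\ast$ of thoroughly manageable size.

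The main obstacle is the choice of $C$ in the lattice construction: it must typically be of magnitude around $N_0^{5}$ so that $l(M)$ indeed exceeds the threshold $\sqrt{54}\,X_1$, and one has no a priori guarantee, so one must verify the inequality a posteriori using LLL and, if it fails, increase $C$ and repeat. A secondary point is to confirm that $\log 2,\log 3,\log 5,\log 7,\log\kappa_1,\log\alpha_1$ are $\Q$-linearly independent (so Matveev's theorem applies), which follows from $\alpha_1$ being an algebraic integer of degree $3$ and $\kappa_1\in\Q(\alpha_1)$ via Lemma~\ref{lm23}. Once $N^\ast$ is in hand, one simply enumerates $P_n$ for $0\le n\le N^\ast$ and tests each for smoothness at $\{2,3,5,7\}$ by trial division; the surviving indices form the list $\{0,1,\ldots,18,20,25,36\}$.
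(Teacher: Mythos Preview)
Your proposal is correct and follows essentially the same route as the paper's own proof: apply Lemma~\ref{lm31} with $k=4$ to get a large Matveev bound, form the same linear form $\Lm_0$, reduce via Lemma~\ref{lm22} and LLL, then enumerate. The paper carries this out with the specific choices $n_0=27$, $\mu=10$, $\gamma=11$, $C=10^{150}$, obtaining $N_0\approx 2.456\times 10^{24}$ and reducing to $n\le 695$ in one LLL pass; your heuristic that $C$ should be about $N_0^{5}$ is slightly off (six logarithms, so the lattice is six-dimensional and one needs $C$ of order $N_0^{6}$), but since you correctly note that $C$ is chosen a posteriori this does not affect the argument.
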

\begin{proof}
We proceed as in the proof of Theorem \ref{th41} taking $n_1=27$ and $\mu=10$.
Indeed, we have $A=\max\{\log 7, \h(\alpha_1), \h(\kappa_1)\}=\log 7$
and therefore $n_1>9>C_0 A$.
Thus we see that $\epsilon<6.3413\times 10^{-5}$ and $\epsilon_1<6.3417\times 10^{-5}$.

Now we apply Lemma \ref{lm31} with $A=\max\{\log 7, \h(\alpha_1), \h(\kappa_1)\}$ to obtain
$$n<C_1\Psi\log(C_1^\prime \Psi)<2.456\times 10^{24}$$
with $C_1<2.066058\times 10^{23}$ and $C_1^\prime<5.381845\times 10^{22}$.

We can easily see that $P_n=2^{f_1}3^{f_2}5^{f_3}7^{f_4}<(1+\epsilon)\kappa_1\alpha^n$
and therefore $f_i<2.456\times 10^{24}$ for each $i$.
Now, putting
$$\Lm_0=f_1\log 2+\cdots +f_4\log 7-\log\kappa_1-n\alpha_1,$$
we have $\abs{\Lm_0}<(1+\epsilon_1)K\delta^{-n}$ for $n\geq n_1$.

Taking $C=10^{150}$, $\gamma=11$, and $X_1=2.456\times 10^{24}$,
we have a reduced matrix $M$ such that $l(M)>X_1\sqrt{49+5\gamma^2}$.
Thus, Lemma \ref{lm22} gives that
$$\abs{\Lm}>\frac{X_1}{C\gamma}>2.233\times 10^{-127}$$
and $n\leq 695$.
Checking each $n$, we can prove the lemma.
\end{proof}

Now we proceed to the remaining case.
Then \eqref{eq13} holds with $b>0$ and we may assume that $\gcd(a, b)=1$.

\begin{lemma}\label{lm52}
If $P_n^a=2^{g_1} 3^{g_2} 5^{g_3} 7^{g_4} P_m^b$ with $\gcd(a, b)=1$, then $m\leq 988$.
Moreover, we must have $bm\leq 3.850562\times 10^{29}$ and $an<5.5553\times 10^{29}$.
\end{lemma}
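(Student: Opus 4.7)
The plan is to apply Theorem \ref{th41} to the van der Laan-Padovan sequence and then sharpen the resulting astronomical bounds by LLL-type lattice reduction, mirroring the strategy of Lemma \ref{lm51} but now for the two-variable equation linking $P_m$ and $P_n$.

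Concretely, I would first instantiate Theorem \ref{th41} with $k=4$, primes $\{2,3,5,7\}$, characteristic roots $\alpha_1,\alpha_2,\alpha_3$, coefficients $\kappa_i=1/(2\alpha_i+3)$, and the numerical data $K=5.599815\ldots$, $\delta=1.524702\ldots$, $D=3$. Choosing, say, $n_1=27$ and a moderately large $\mu$ as in Lemma \ref{lm51}, one computes the constants $C_0,C_1,\ldots,C_7$ and $\Psi$ numerically; Theorem \ref{th41} then yields explicit upper bounds $m\leq m^{\star}$ and $n\leq n^{\star}$, together with crude bounds on $a$, $b$, $|b-a|$, $|bm-an|$, and each $|g_i|$.

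Next, I would reduce these bounds by lattice methods applied to the linear form
\begin{equation*}
\Lambda=\sum_{i=1}^{4}g_i\log p_i-(b-a)\log\kappa_1-(bm-an)\log\alpha_1
\end{equation*}
from the proof of Theorem \ref{th41}, which satisfies $|\Lambda|<(1+\epsilon_1)(a+b)K\delta^{-m}$. The six integer coefficients $(g_1,g_2,g_3,g_4,b-a,bm-an)$ are bounded by the quantities just computed; feeding these into the six-dimensional approximation lattice $M$ from Section 2, LLL-reducing it for suitably large $C$ and small $\gamma$, and invoking Lemma \ref{lm22} gives a much improved lower bound on $|\Lambda|$ and hence a much smaller upper bound on $m$. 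If one pass does not suffice, I would iterate with the sharpened coefficient bounds; a second round should force $m\leq 988$. The main obstacle is precisely this LLL step: the real numbers $\log p_i$, $\log\kappa_1$, $\log\alpha_1$ must be computed to precision well exceeding $\log C$, and one must verify that the shortest vector of the reduced basis provably exceeds $X_1\sqrt{(n+1)^2+(n-1)\gamma^2}$ for the current coefficient bound $X_1$. This is a numerical rather than conceptual hurdle, and is precisely the calculation carried out by the accompanying PARI-GP script.

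Finally, once $m\leq 988$ is in hand, the numerical inequalities for $bm$ and $an$ follow by direct substitution. From the factorisations $P_m=p_1^{e_1}\cdots p_4^{e_4}x^a$ and $P_n=p_1^{f_1}\cdots p_4^{f_4}x^b$ used in the proof of Theorem \ref{th41} we have $a\log x\leq\log P_m\leq(1+\epsilon)(m\log\alpha_1+\log\kappa_1)$ and $b\log x\leq\log P_n\leq(1+\epsilon)(n\log\alpha_1+\log\kappa_1)$; combining $m\leq 988$ with the bound $a\log x<C_7\Psi\log(C_6\Psi)$ and the bound on $n$ from Theorem \ref{th41}, and plugging in the numerical values of $C_0,C_2,C_6,C_7,\Psi$ for the Padovan sequence, the explicit inequalities $bm\leq 3.850562\times 10^{29}$ and $an<5.5553\times 10^{29}$ drop out.
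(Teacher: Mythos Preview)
Your plan is correct and coincides with the paper's argument: instantiate the constants of Theorem~\ref{th41} with $n_1=27$, $\mu=10$ to obtain first-pass bounds (the paper gets $an,\,bm<1.33\times 10^{51}$), then apply Lemma~\ref{lm22} to the six-term form $\Lambda$ twice---once with $C=10^{310}$, $X_1\approx 1.33\times 10^{51}$ to reach $m\leq 1564$, and again with $C=10^{183}$, $X_1\approx 8.8\times 10^{29}$ to reach $m\leq 988$. One small correction to your final paragraph: the bounds $bm\leq 3.85\times 10^{29}$ and $an<5.56\times 10^{29}$ do not come from the large $C_7$-bound of Theorem~\ref{th41} (that would only give $\sim 10^{51}$); they are produced \emph{inside} the second reduction loop by feeding $a\log x\leq (1+\epsilon)m\log\alpha_1$ (which you correctly wrote down) with the new $m$ back into \eqref{eq43} and combining $m\leq 988$ with the bound $b<3.9\times 10^{26}$ from \eqref{eq46}.
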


\begin{proof}
We take $n_1=27$ and $\mu=10$ again.
Thus, we have $\epsilon<6.3413\times 10^{-5}$ and $\epsilon_1<6.3417\times 10^{-5}$
We begin by observing that $P_m=2^{e_1} \cdots 7^{e_4} x^a$ and
$P_n=2^{f_1}\cdots 7^{f_4} x^b$ for some integers $e_1, \ldots , e_4$,
$f_1, \ldots, f_4$, and $x\geq 11$.

We put
$$\Lm=g_1\log 2+\cdots +g_4\log 7+(a-b)\log\kappa_1+(an-bm)\kappa\alpha_1.$$
Instead of \eqref{eq49}, we obtain
\begin{equation}\label{eq51}
a\log x<C_5(1+\epsilon)(1+\epsilon_2)\Psi\log\alpha_1\log(C_5^\prime a\log x)
\end{equation}
with
$\epsilon_2=0$, $C_2<1.062372\times 10^{26}$, $C_2^\prime<7.587587\times 10^{24}$,
$C_3<1.062439\times 10^{26}$, $C_5<8.072767\times 10^{22}$, and $C_5^\prime<2.003782\times 10^{54}$.
Then, \eqref{eq51} implies that $a\log x<9.561\times 10^{23}$.

Hence, we must have $\log x\leq a\log x<9.561\times 10^{23}$.
From \eqref{eq43}, we obtain
$$an<\max\{an_0, C_2\Psi\log(C_2^\prime \Psi)(a\log x)\}<1.325038\times 10^{51}.$$
Moreover, from \eqref{eq48}, we obtain $m<3.399751\times 10^{24}$
and \eqref{eq46} gives $\max\{a, b\}<3.897329\times 10^{26}$.
Hence, we have $bm<1.325038\times 10^{51}$.

Now we can apply Lemma \ref{lm22} with $X_1=1.325038\times 10^{51}$.
Taking $C=10^{310}$ and $\gamma=2.6$,
we have a reduced matrix $M$ such that $l(M)>X_1\sqrt{49+5\gamma^2}$.
Hence, Lemma \ref{lm22} yields that $\abs{\Lm}>5.0963\times 10^{-260}$.

We observe that
$$\delta^m<1.962208(a+3.897329\times 10^{24})(1+\epsilon_1)K\times 10^{259}.$$
Since $a\log x<(1+\epsilon)m\log\alpha_1$,
we have $m\leq 1564$, $bm\leq 6.0955\times 10^{29}$, $a\log x<634.54$
and, using \eqref{eq43} again, $an<8.794\times 10^{29}$.

Now we use Lemma \ref{lm22} again but with $C=10^{183}$, $\gamma=4$, and $X_1=8.794\times 10^{29}$
to obtain $\abs{\Lm}>2.1985\times 10^{-154}$ and, proceeding like above, we have $m\leq 988$,
$bm\leq 3.850562\times 10^{29}$, $a\log x<400.85$, and $an<5.5553\times 10^{29}$.
This completes the proof of the lemma.
\end{proof}

Now we check each $m\leq 988$.
If $m\in \{0, \ldots, 18, 20, 25, 36\}$, then so must $n$.
Hence, we may assume that $m$ is equal to none of $0, \ldots ,18$, $20$, $25$, and $36$.
Now we play with the logarithmic form
$$\Lm_1=a\log\frac{P_n}{\kappa_1\alpha_1^n}=b\log P_m+g_1\log 2+\cdots g_4\log 7-an\log\alpha_1-a\log\kappa_1$$
instead of $\Lm$ for each $m\leq 988$ not equal to $0, \ldots, 18$, $20$, $25$, or $36$.
Indeed, for any such $m$,
we confirmed that Lemma \ref{lm22} works for some $C$ and $\gamma$ with $\gamma C\leq 3\times 10^{214}$
and therefore $\abs{\Lm_1}>1.85176\times 10^{-185}$.
Moreover, putting $H_m=P_m/(2^{e_1} \cdots 7^{e_4})$ with $H_m=0$ or $\gcd(H_m, 210)=1$,
we confirmed that $H_m$ can never be a perfect power for $0\leq m\leq 1012$ unless $H_m\in \{0, 1\}$.
Hence, we must have $a=1$.

Now we assume that $n>\max\{m, 100\}$.
Since
\begin{equation}
\abs{\Lm_1}=\abs{\log n-\log\kappa-n\log\alpha_1}<\frac{K(1+\epsilon_3)}{\delta^n}
\end{equation}
with $\epsilon_3<3\times 10^{-18}$, we have $n\leq 1012$.
We observe that $H_m=H_n$.
Indeed, if $H_m=1$, then $H_n=1$ noting that $H_m^b=H_n^a$.
Otherwise, $H_m$ can never be a perfect power for $0\leq m\leq 1012$ as mentioned above and
we must have $a=b=1$, which yields that $H_m=H_n$ again.
We confirmed that this can occur only when
$m, n\in\{1, 2, 4\}$ with $H_m=H_n=0$ ,
$m, n\in\{0, 3, 5, 6, \ldots, 18, 20, 25, 36\}$ with $H_m=H_n=1$,
or $m, n\in\{21, 27, 49\}$ with $H_m=H_n=13$.
This completes the proof of Theorem \ref{th2}.

{}
\end{document}